\newcolumntype{b}{X}
\newcolumntype{s}{>{\hsize=.65\hsize}X}
\newcolumntype{R}{>{\hsize=.8\hsize}X}
\newcolumntype{t}{>{\hsize=.05\hsize}X}
  \numberwithin{equation}{section}
\begin{document}


%
%

%
%
\title{Asymptotic behaviour of total generalised variation}
%
%
%
\author{Konstantinos Papafitsoros}
\email{kp366@cam.ac.uk}
\author{Tuomo Valkonen}
\email{tjmv3@cam.ac.uk}
\address{Department of Applied Mathematics and Theoretical Physics University of Cambridge, United Kingdom}

\maketitle

\begin{abstract}
The recently introduced   second order total generalised variation functional $\tgvba$ has been a successful regulariser for image processing purposes. Its definition involves two positive parameters $\alpha$ and $\beta$ whose values determine the amount and the quality of the regularisation. In this paper we report on the behaviour of $\tgvba$ in the cases where the parameters $\alpha, \beta$ as well as their ratio $\beta/\alpha$ becomes very large or very small. Among others, we prove that for sufficiently symmetric two dimensional data and large ratio $\beta/\alpha$, $\tgvba$ regularisation coincides with total variation ($\tv$) regularisation.  
\keywords{Total Variation, Total Generalised Variation, Regularisation Parameters, Asymptotic Behaviour of Regularisers.}
\end{abstract}

\section{Introduction}
Parameterisation of variational image processing models has not yet been solved to full satisfaction. Towards the better understanding of such models, we study the behaviour of their solutions  as the parameters change. Within the constraints of these proceedings, we concentrate in particular on the asymptotic behaviour of total generalised variation \cite{TGV}.

In the variational  image reconstruction approach, one typically tries to recover an improved version $u$ of a corrupted image $f$ as a solution of a minimisation problem of the type
\begin{equation}\label{general_min}
\min_{u}\;\mathrm{\Phi}(f,Tu)+\mathrm{\Psi}(u),
\end{equation} 
where $T$ is a linear operator that models the type of corruption. 
Here the term $\mathrm{\Phi}(f,Tu)$ ensures the fidelity of the reconstruction to the initial data. The term $\mathrm{\Psi}(u)$, the \emph{regulariser}, imposes extra regularity on $u$ and it is responsible for the overall quality of the reconstruction. The two terms are balanced by one or more parameters within $\mathrm{\Psi}$. A typical example is $\mathrm{\Psi}(u)=\alpha\tv(u)$, i.e., the total variation of $u$ weighted by a positive parameter $\alpha$ \cite{ChambolleLions,rudin1992nonlinear}.  While total variation regularisation leads to image reconstructions with sharp edges, it also promotes piecewise constant structures leading to the \emph{staircasing effect}. The second order total generalised variation $\tgvba$  \cite{TGV}  resolves that issue by optimally balancing first and second order information in the image data. The $\tgvba$ functional reads
\[\tgvba(u)=\min_{w\in\bd(\om)} \alpha\|Du-w\|_{\cM}+\beta\|\cE w\|_{\cM},\]
where $\|\cdot\|_{\cM}$ is the Radon norm, $\bd(\om)$ is the space of functions of bounded deformation in the domain $\om$, $\cE$ is the symmetrised gradient and $\alpha,\beta>0$.

Since the values of $\alpha$ and $\beta$ determine the amount and the quality of the reconstruction, it is important to understand their role in the regularisation process. In this paper we study the asymptotic behaviour of $\tgvba$ regularised solutions for the extremal  cases, i.e., for large and small values of $\alpha$, $\beta$ and their ratio $\beta/\alpha$. For simplicity we focus on the case where $\mathrm{\Phi}(f,Tu)=\|f-u\|_{L^{2}(\om)}^{2}$ but in most cases, our results can be extended to more general fidelities.

\textbf{Summary of our results:} In Section \ref{sec:small_a_b} we show that as long as at least one of the parameters $\alpha,\beta$ is going to zero then the $\tgvba$ solutions converges to the data $f$. In one dimension we obtain a stronger result, showing in addition that for small values of $\beta$ the solutions are continuous. In Section \ref{sec:large_ba} we focus on the case when the ratio $\beta/\alpha$ is large, proving that in this regime $\tgvba$ is equivalent to $\tv$ modulo ``an affine correction''. In Section \ref{sec:large_a_b} we show that by setting the values of $\alpha$ and $\beta$ large enough we obtain the linear regression of the data as a solution. In Section \ref{sec:large_ba_sym}, we exploit the result of Section \ref{sec:large_ba} and we show that for sufficiently symmetric data and large $\beta/\alpha$, $\tgvba$ is equal to $\alpha\tv$. Our paper is furnished with some numerical experiments in Section \ref{sec:numerics}, which verify our analytical results.

\section{Preliminaries and Notation}

In this section we briefly review the basic theory of functions of bounded variation, properties of $\tv$ and $\tgvba$ and we also fix our notation.

Let $\om$ be an open, bounded domain in $\RR^{d}$. A function $u\in L^{1}(\om)$ is a \emph{function of bounded variation} ($u\in \bv(\om)$) if its distributional derivative $Du$ is represented by an $\RR^{d}$--valued finite Radon measure. The total variation of $u$ is defined as $\tv(u)=\|Du\|_{\cM}$, where $\|\mathcal{T}\|_{\cM}$ denotes the \emph{Radon norm} of an $\RR^{\ell}$--valued distribution $\mathcal{T}$ in $\om$:
\begin{equation}\label{radonnorm}
\|\mathcal{T}\|_{\cM}:= \sup \left \{ \langle \mathcal{T},v \rangle:\; v\in \cC_{c}^{\infty}(\om;\RR^{\ell}),\; \|v\|_{\infty}\le 1\right\},
\end{equation}
and it is equal to the total variation $|Du|(\om)$ of the measure $Du$ when $u\in \bv(\om)$. 
The measure $Du$ can be decomposed into the absolutely continuous  and singular part with respect to the Lebesgue measure $\mathcal{L}^{d}$, 
$Du=D^{a}u+D^{s}u=\nabla u \mathcal{L}^{d}+D^{s}u$, 
where $\nabla u$ is the Radon-Nikod\'ym derivative $D^{a}u/\mathcal{L}^{d}$.
The space $\bv(\om)$ is a Banach space endowed with the norm $\|u\|_{\bv(\om)}=\|u\|_{L^{1}(\om)}+\|Du\|_{\cM}$. We refer the reader to \cite{AmbrosioBV} for a complete account on the functions of bounded variation.

Analogously we define the space of \emph{functions of bounded deformation} $\bd(\om)$ as the set of all the $L^{1}(\om;\RR^{d})$ functions whose symmetrised distributional derivative $\cE u$ is represented by an $\RR^{d\times d}$--valued finite Radon measure  \cite{bd}. Notation-wise one can readily check that $\|\cE u\|_{\cM}=|\cE u|(\om)$. The space $\bv(\om)$ is strictly contained in $\bd(\om)$ for $d>1$ while $\bd(\om)=\bv(\om)$ for one dimensional domains $\om$. We are not going to need much of the theory of  $\bd$ functions apart from the so-called \emph{Sobolev--Korn inequality}. The latter says that if $\om$ has a Lipschitz boundary then  there exists a constant $C_{\bd}>0$ that depends only on $\om$ such that for every $w\in \bd(\om)$ there exists an element $r_{w}\in \mathrm{Ker}\cE$ such that
\begin{equation}\label{Sobolev-Korn}
\|w-r_{w}\|_{L^{1}(\om)}\le C_{\bd} \|\cE w\|_{\cM}.
\end{equation}
Here the kernel of $\cE$ consists of all the functions of the form $r(x)=Ax+b$, where $b\in \RR^{d}$ and $A\in \RR^{d\times d}$ is a skew symmetric matrix.

The \emph{second order total generalised variation} $\tgvba(u)$ of a function $u\in L^{1}(\om)$ is defined as \cite{TGV,BrediesL1,BredValk}
\begin{equation}\label{tgv_w}
\tgvba(u)=\min_{w\in\bd(\om)} \alpha\|Du-w\|_{\cM}+\beta\|\cE w\|_{\cM},
\end{equation}
for $\alpha,\beta>0$. The above definition is usually referred to as the \emph{differentiation cascade} definition of $\tgvba$, see \cite{TGV} for the original formulation.
 It can be shown that $\tgvba$ is a seminorm  and together with  $\|\cdot \|_{L^{1}(\om)}$ form a norm equivalent to $\|\cdot \|_{\bv(\om)}$  \cite{BredValk}, i.e., there exist constants $0<c<C$ that depend only on $\om$ such that for every $u$ with $\tgvba(u)<\infty$
\begin{equation}\label{bv_bgv}
c\|u\|_{\bv(\om)}\le \|u\|_{L^{1}(\om)}+\tgvba(u)\le C\|u\|_{\bv(\om)}.
\end{equation}
Notice that the optimal $w$ in \eqref{tgv_w} is not unique in general.  In fact $w$ is a solution of an $L^{1}$--$\|\cE\|_{\cM}$ problem (not strictly convex). Indeed since $\|Du\|_{\cM}=\|D^{a}u\|_{\cM}+\|D^{s}u\|_{\cM}$, we have:
\begin{align}
w&\in\underset{w\in\bd(\om)}{\operatorname{argmin}}\; \alpha \|Du-w\|_{\mathcal{M}}+\beta\|\cE w\|_{\mathcal{M}}\iff\nonumber\\
w&\in\underset{w\in\bd(\om)}{\operatorname{argmin}}\;\int_{\Omega}|\nabla u-w|~dx+\frac{\beta}{\alpha}\|\cE u\|_{\cM}. \label{L1-E}
\end{align}
In the following sections, we will take specific advantage of the fact that $w$ solves \eqref{L1-E}, a problem which can be seen as an analogous one to $L^{1}$--$\tv$ minimisation.

Let us  finally mention that properties of $\tgvba$ regularisation have been studied in the one dimensional case and when $\mathrm{\Phi}(f,Tu)=\frac{1}{p}\|f-u\|_{L^{p}(\om)}^{p}$ for $p=1$ or $2$, in \cite{BrediesL1,papafitsoros2013study,poschl2013exact}.


\section{Asymptotic behaviour}

\subsection{$\boldsymbol{\beta\to 0}$ while  $\boldsymbol{\alpha}$ is fixed and $\boldsymbol{\alpha\to 0}$ while  $\boldsymbol{\beta}$ is fixed}\label{sec:small_a_b}

In this section we study the limiting behaviour of $\tgvba$ regularisation for small values of $\alpha,\beta$. We first prove that by fixing $\alpha$ or $\beta$ and sending $\beta$ or $\alpha$ to zero respectively, then the regularised $\tgvba$ solution converges to the data $f$. For simplicity we work on the $L^{2}$--$\tgvba$ denoising problem, i.e., $T=Id$,  but the next result can be extended in the more general case e.g. when the fidelity term reads $\frac{1}{p}\|f-Tu\|^{p}$, with $p\ge1$ and $T$ being a bounded, linear operator $T: L^{p}(\om)\to L^{p}(\om)$. For convenience we set
\begin{equation}\label{uab}
(u_{\beta,\alpha},w_{\beta,\alpha})= \underset{\substack{u\in\bv(\om)\\w\in\bd(\om)}}{\operatorname{argmin}}\; \frac{1}{2}\|f-u\|_{L^{2}(\om)}^{2}+\alpha \|Du-w\|_{\mathcal{M}}+\beta\|\cE w\|_{\mathcal{M}}.
\end{equation}

\newtheorem{optimality}{Proposition}
\begin{optimality}\label{lbl:todata}
Let $\om\subseteq \RR^{d}$, open and bounded and $f\in L^{2}(\om)\cap\bv(\om)$. Then
\begin{enumerate}[(i)]
\item Fixing $\alpha>0$ we have that $\|f-u_{\beta,\alpha}\|_{L^{2}(\om)}^{2}\to 0$ as $\beta\to 0$.
\item Fixing $\beta>0$ we have that $\|f-u_{\beta,\alpha}\|_{L^{2}(\om)}^{2}\to 0$ as $\alpha\to 0$.
\end{enumerate}
\end{optimality}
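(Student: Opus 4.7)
The plan for both parts is the same in spirit: upper-bound the objective at the minimiser by the objective evaluated at a cleverly chosen competitor, and show this bound vanishes in the appropriate limit. Denoting the objective in \eqref{uab} by $J_{\beta,\alpha}$, we have $\tfrac12\|f-u_{\beta,\alpha}\|_{L^2(\om)}^2 \le J_{\beta,\alpha}(u_{\beta,\alpha},w_{\beta,\alpha}) \le J_{\beta,\alpha}(u,w)$ for any admissible $(u,w)$, so everything reduces to producing a competitor whose value tends to zero.

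For part (ii), I would simply take $(u,w)=(f,0)$. Since $f\in \bv(\om)$ by assumption, $\|Df\|_{\cM}=\tv(f)<\infty$, so $J_{\beta,\alpha}(f,0)=\alpha\,\tv(f)\to 0$ as $\alpha\to 0$. No further work is needed, and this argument is insensitive to the value of $\beta$.

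For part (i) the choice $(u,w)=(f,w)$ is insufficient: for any $w\in \bd(\om)\subset L^1(\om;\RR^d)$ the measure $w\mathcal{L}^d$ is absolutely continuous, so $\|Df-w\|_{\cM}\ge \|D^sf\|_{\cM}$, and this contribution is fixed while $\beta\to 0$ with $\alpha$ fixed. The remedy is to smooth $f$ first. I would fix a standard mollifier, extend $f$ by zero to $\RR^d$, and let $f_\epsilon$ be the restriction of the mollification to $\om$. Then $f_\epsilon\in C^\infty(\om)$ with $f_\epsilon\to f$ in $L^2(\om)$, and the symmetrised gradient of $w:=\nabla f_\epsilon$ is just the Hessian $\nabla^2 f_\epsilon$, which lies in $L^1(\om)$ for each fixed $\epsilon>0$. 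Using $(u,w)=(f_\epsilon,\nabla f_\epsilon)$ kills the middle term of $J_{\beta,\alpha}$ and yields
\[
\tfrac12\|f-u_{\beta,\alpha}\|_{L^2(\om)}^{2}\le \tfrac12\|f-f_\epsilon\|_{L^2(\om)}^{2}+\beta\,\|\nabla^2 f_\epsilon\|_{L^1(\om)}.
\]
Sending first $\beta\to 0$ (with $\epsilon$ frozen) and then $\epsilon\to 0$ gives the conclusion.

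The only technical obstacle is the mollification step near $\partial\om$, since $\om$ is merely open and bounded. This is harmless: working with the zero-extension (or any standard extension to $\RR^d$) preserves the $L^2$-convergence $f_\epsilon\to f$ on $\om$ and guarantees $\nabla^2 f_\epsilon\in L^1(\om)$ for each $\epsilon>0$, which is all that is used. Notice that nowhere in the argument is the $\bv$-norm of the approximants controlled; only convergence in $L^2(\om)$ and finiteness of $\|\nabla^2 f_\epsilon\|_{L^1(\om)}$ for each fixed $\epsilon$ matter, and the blow-up of the latter as $\epsilon\to 0$ is absorbed by $\beta\to 0$ via the iterated-limit structure above.
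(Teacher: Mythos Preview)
Your argument is correct and follows the same competitor-based strategy as the paper: for (i) the paper also takes $(u,w)=(f_\delta,\nabla f_\delta)$ with $f_\delta$ a mollification of $f$, obtaining the bound $\tfrac12\|f-f_\delta\|_{L^2}^2+\beta\|\cE(\nabla f_\delta)\|_{\cM}$ and then choosing $\delta$ and $\beta$ appropriately. The only differences are cosmetic: the paper exploits $f\in\bv(\om)$ to write the explicit estimate $\|\cE(\nabla f_\delta)\|_{\cM}\le \delta^{-1}\|Df\|_{\cM}$ (via $\nabla^2 f_\delta=(\nabla\rho_\delta)\ast Df$) and then makes an explicit $\epsilon$--$\delta$ choice, whereas your iterated-limit argument only needs finiteness of $\|\nabla^2 f_\epsilon\|_{L^1}$ for each fixed $\epsilon$ and hence works already for $f\in L^2(\om)$; and for (ii) the paper uses $(f_\delta,0)$ while your choice $(f,0)$ is simpler and perfectly adequate since $f\in\bv(\om)$ is assumed.
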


\begin{proof}
$(i)$ Let $\epsilon>0$ and $\{\rho_{\delta}\}_{\delta>0}$ be a standard family of mollifiers, i.e., $\rho_{\delta}(x)=\delta^{-d}\rho(x/\delta)$, where $\rho\in \mathcal{C}_{c}^{\infty}(\RR^{d})$, and set $f_{\delta}:=\rho_{\delta}\ast f$.
Because $(u_{\beta,\alpha},w_{\beta,\alpha})$ is an optimal pair in \eqref{uab} by setting $u=f_{\delta}$ and $w=\nabla f_{\delta}$ we have the following estimates, for some constant $C>0$
\begin{align*}
\frac{1}{2}\|f-u_{\beta,\alpha}\|_{L^{2}(\om)}^{2}& \le \frac{1}{2}\|f-u_{\beta,\alpha}\|_{L^{2}(\om)}^{2}+\alpha \|Du_{\beta,\alpha}-w_{\beta,\alpha}\|_{\mathcal{M}}+\beta\|\cE w_{\beta,\alpha}\|_{\mathcal{M}}\\
									& \le \frac{1}{2}\|f-f_{\delta}\|_{L^{2}(\om)}^{2}+\beta\|\cE (\nabla f_{\delta})\|_{\cM}\\
									& \le  \frac{1}{2}\|f-f_{\delta}\|_{L^{2}(\om)}^{2}+\beta\|\cE \rho_{\delta} \ast Df\|_{\cM}\\
									&\le  \frac{1}{2}\|f-f_{\delta}\|_{L^{2}(\om)}^{2}+\beta\frac{1}{\delta} \|Df\|_{\cM}.
\end{align*}
We set $\delta$ small enough such that $\|f-f_{\delta}\|_{L^{2}(\om)}^{2}\le \epsilon/2$. By choosing $\beta< \delta \epsilon/2\|Df\|_{\cM}$, the result follows. 

\noindent
$(ii)$ The proof is very similar to the $(i)$ case, by setting $u=f_{\delta}$ and $w=0$, instead.
\end{proof}

\noindent
\textbf{Remark}: Of course in both $(i)$--$(ii)$ cases of Proposition \ref{lbl:todata}, we also get \\$\|Du_{\beta,\alpha}-w_{\beta,\alpha}\|_{\cM}\to 0$ as well as $\|\cE w_{\beta,\alpha}\|_{\cM}\to 0$ as $\beta\to 0$ or $\alpha\to 0$. 

Another interesting behaviour occurs when $\beta\to 0$.  In \cite{valkonen2014jump2}, it is proved that for an arbitrary dimension and  a fixed $\alpha>0$ we have 
\[\|D^{s}u_{\beta,\alpha}\|_{\cM}\to 0,\quad \text{as }\beta\to 0.\]
However it turns out that in dimension one we are able to prove something stronger, provided the data are bounded:
\newtheorem{Dsu0}[optimality]{Proposition}
\begin{Dsu0}\label{lbl:Dsu0}
Let $\om=(a,b)\subseteq \RR$, $f\in L^{\infty}(\om)\cap\bv(\om)$ and $\alpha>0$. Then there exists a threshold $\beta^{\ast}>0$ such that for every $\beta<\beta^{\ast}$ we have that
\[\|D^{s}u_{\beta,\alpha}\|_{\cM}=0 \quad \text{and}\quad w_{\beta,\alpha}=\nabla u_{\beta,\alpha}.\]
In particular this means that for $\beta<\beta^{\ast}$
\begin{equation}\label{uab_b0}
u_{\beta,\alpha}= \underset{\substack{u\in\bv(\om)}}{\operatorname{argmin}}\; \frac{1}{2}\|f-u\|_{L^{2}(\om)}^{2}+\beta\|D^{2} u\|_{\mathcal{M}}.
\end{equation}
\end{Dsu0}

\begin{proof}
From the optimality conditions derived in \cite{papafitsoros2013study}, we have that $(u_{\beta,\alpha},w_{\beta,\alpha})$ solve \eqref{uab} if and only if there exists a dual variable  $v\in H_{0}^{2}(\Omega)$ such that
%
\[v''=f-u_{\beta,\alpha}\;\;(C_{f}),\;\; -v'\in \alpha\mathrm{Sgn}(Du_{\beta,\alpha}-w_{\beta,\alpha}) \;\;(C_{\alpha}),\;\;  v\in\beta\mathrm{Sgn}(Dw_{\beta,\alpha})\;\; (C_{\beta}),\]
where for a finite Radon measure $\mu$ we define
\[\sgn (\mu):= \left \{v\in L^{\infty}(\om)\cap L^{\infty}(\om,|\mu|):\; \|v\|_{\infty}\le 1,\; v=\frac{\mu}{|\mu|},\; |\mu|-a.e. \right \}.\]
 Note also that there exists a constant $C$ depending only on $\om$ such that the following interpolation inequality holds \cite[Section 5.10, ex. 9]{EvansPDEs}
\begin{equation}\label{interp}
\|Dv\|_{L^{2}(\om)}\le C \|v\|_{L^{2}(\om)}^{1/2}\|D^{2}v\|_{L^{2}(\om)}^{1/2},\quad \text{for all } v\in H_{0}^{2}(\om).
\end{equation}
Observe first that (denoting this dual function $v$ by $v_{\beta,\alpha}$)
\begin{equation}\label{vprimeL2}
\|Dv_{\beta,\alpha}\|_{L^{2}(\om)}\to 0\quad \text{as } \beta\to 0.
\end{equation}
 Indeed, from Proposition \ref{lbl:todata} and condition $(C_{f})$ we have that $\|D^{2}v_{\beta,\alpha}\|_{L^{2}(\om)}\to 0$ while from condition $(C_{\beta})$ we have that $\|v_{\beta,\alpha}\|_{\infty}\to 0$ and thus $\|v_{\beta,\alpha}\|_{L^{2}(\om)}\to 0$ as $\beta\to 0$. Then we just apply the estimate \eqref{interp}.

From the fact that we are in dimension one and from \eqref{bv_bgv} we have for a generic constant $C$
\begin{align*}
\|u_{\beta,\alpha}\|_{L^{\infty}(\om)} &\le C\|u_{\beta,\alpha}\|_{\bv(\om)}\le C(\|u_{\beta,\alpha}\|_{L^{2}(\om)}+\tgvba(u_{\beta,\alpha}))\\
						       & \le C(\|f-u_{\beta,\alpha}\|_{L^{2}(\om)}+\|f\|_{L^{2}(\om)}+\tgvba(u_{\beta,\alpha}))\\
						       &\le C(\|f\|_{L^{2}(\om)},\tgvba(f)):=M,
\end{align*}
which in combination with condition $(C_{f})$ and the fact that $f\in L^{\infty}(\om)$ implies that $\|D^{2}v_{\beta,\alpha}\|_{\infty}\le M$. Thus from the Arzel\`a-Ascoli theorem we get the existence of a sequence $\beta_{n}\to 0$ and a continuous function $\tilde{v}$ such that $v_{\beta_{n},\alpha}\to \tilde{v}$ uniformly. We immediately deduce using \eqref{vprimeL2} that $v_{\beta,\alpha}\to 0$ uniformly as $\beta\to 0$. But then condition $(C_{\alpha})$ implies that there must exist a $\beta_{0}$ such that for every $\beta<\beta_{0}$ we have $Du_{\beta,\alpha}=w_{\beta,\alpha}$, as measures, i.e., $D^{s}u_{\beta,\alpha}=0$ and $w_{\beta,\alpha}=\nabla u_{\beta,\alpha}$ since otherwise there would exist a point $x_{\beta_{n},\alpha}\in (a,b)$ with $Dv_{\beta_{n},\alpha}(x_{\beta_{n},\alpha})=\alpha$ for a sequence $(\beta_{n})_{n\in \NN}$ converging to 0, a contradiction.
\end{proof}

\noindent
\textbf{Remark}: 
We believe that the above proof
sets the basis for an analogue proof in higher dimensions even though admittedly this is a hard task.
 That would require an interpolation inequality for $v$, $\mathrm{div}v$ and $\mathrm{div}^{2}v$ analogous to \eqref{interp},  as well as a proof that the $\tgvba$ regularised solution remains bounded, for bounded data $f$.

\subsection{Large ratio $\boldsymbol{\beta/\alpha}$}\label{sec:large_ba}

Recall from \eqref{L1-E} that the optimal $w$ is a solution to a $L^{1}$--$\|\cE\|_{\cM}$ type of problem. This motivates us to study some particular properties of the general form of such a problem:
\begin{equation}\label{general-L1-E}
\min_{w\in \bd(\om)} \|g-w\|_{L^{1}(\om;\RR^{d})}+\lambda \|\cE w\|_{\cM},\quad g\in L^{1}(\om;\RR^{d}),\;\lambda>0.
\end{equation}
The next theorem states that if the parameter $\lambda$ is larger than a certain threshold (depending only on $\om$) then a solution $w$ of \eqref{general-L1-E} will belong to $\mathrm{Ker}\cE$. This is analogous to the $L^{1}$--$\tv$ problem \cite{chanL1,duvalL1}, where there for large enough value of the parameter $\lambda$, the solution is constant, i.e., belongs to the kernel of $\tv$.

\newtheorem{L1-E-threshold}[optimality]{Proposition}
\begin{L1-E-threshold}\label{lbl:L1-E-threshold}
Let $\om\subseteq \RR^{d}$ be an open, bounded set with Lipschitz boundary, $g\in L^{1}(\om;\RR^{d})$ and $C_{\bd}$ the constant that appears in the Sobolev--Korn inequality \eqref{Sobolev-Korn}. Then if $\lambda> C_{\bd}$ and $w_{\lambda}$ is a solution of \eqref{general-L1-E} with parameter $\lambda$, then
\begin{equation}\label{L1-E-threshold:main}
w_{\lambda}=m_{\cE}(g):=\underset{w\in\mathrm{Ker}\cE}{\operatorname{argmin}}\;  \|g-w\|_{L^{1}(\om;\RR^{d})}.
\end{equation}
\end{L1-E-threshold}

\begin{proof}
Since $w_{\lambda}$ is a solution of \eqref{general-L1-E}, it is easy to check that if $r_{w_{\lambda}}$ is the element of $\mathrm{Ker}\cE$ that corresponds to $w_{\lambda}$ in the Sobolev-Korn inequality then, $W_{\lambda}:= w_{\lambda}-r_{w_{\lambda}}$ solves the following problem:
\begin{equation}\label{L1-E-W}
\min_{w\in\bd(\om)} \|(g-w_{\lambda})-w\|_{L^{1}(\om;\RR^{d})}+\lambda\|\cE w\|_{\cM}.
\end{equation}
Indeed, we have for an arbitrary $w\in\mathrm{BD}(\om)$
\begin{align*}
&\|(g-r_{w_{\lambda}})-W_{\lambda}\|_{L^{1}(\om;\RR^{d})}+\lambda\|\mathcal{E}W_{\lambda}\|_{\cM}\le \|(g-r_{w_{\lambda}})-w\|_{L^{1}(\om;\RR^{d})}+\lambda\|\mathcal{E}w\|_{\cM}, \\
& \hspace{6cm}\iff\\
&\|g-w_{\lambda}\|_{L^{1}(\om;\RR^{d})}+\lambda\|\mathcal{E}w_{\lambda}\|_{\cM}\le \|g-(w+r_{w_{\lambda}})\|_{L^{1}(\om;\RR^{d})}+\lambda\|\mathcal{E}(w+r_{w_{\lambda}})\|_{\cM},
\end{align*}
with the latter being true since
\[\|g-w_{\lambda}\|_{L^{1}(\om;\RR^{d})}+\lambda\|\mathcal{E}w_{\lambda}\|_{\cM}\le \|g-w\|_{L^{1}(\om;\RR^{d})}+\lambda\|\mathcal{E}w\|_{\cM},\quad \forall w\in\mathrm{BD}(\om).\]
Since $W_{\lambda}$ solves \eqref{L1-E-W}, setting $G_{\lambda}:=g-w_{\lambda}$ we have that
\[\|G_{\lambda}-W_{\lambda}\|_{L^{1}(\om;\RR^{d})}+\lambda \|\cE W_{\lambda}\|_{\cM}\le \|G_{\lambda}\|_{L^{1}(\om;\RR^{d})},\]
and using the Sobolev--Korn inequality $\|W_{\lambda}\|_{L^{1}(\om;\RR^{d})}\le C_{\bd}\|\cE W_{\lambda}\|_{\cM}$ we have
\begin{equation}\label{W_final}
\|G_{\lambda}-W_{\lambda}\|_{L^{1}(\om;\RR^{d})}+\frac{\lambda}{C_{\bd}} \| W_{\lambda}\|_{L^{1}(\om;\RR^{d})}\le \|G_{\lambda}\|_{L^{1}(\om;\RR^{d})}.
\end{equation}
A simple application of the triangle inequality in \eqref{W_final} yields that if $\lambda>C_{\bd}$, then we must have $W_{\lambda}=0$, i.e., $w_{\lambda}=r_{w_{\lambda}}$ from which \eqref{L1-E-threshold:main} straightforwardly follows.
\end{proof}

The notation $m_{\cE}(g)$ can be interpreted as the median of $g$ with respect to $\mathrm{Ker}\cE$. If $d=1$, then this is nothing else than the usual median since in that case $\mathrm{Ker}\cE$ consists of all the constant functions.
The following corollary follows immediately from \eqref{L1-E} and Proposition \eqref{lbl:L1-E-threshold}. It says that for large $\beta/\alpha$, $\tgvba$ is almost equivalent to $\tv$ up to an ``affine correction''.

\newtheorem{TGV_largeba}[optimality]{Corollary}
\begin{TGV_largeba}\label{lbl:TGV_largeba}
Let $\om\subseteq \RR^{d}$ be an open, bounded set with Lipschitz boundary and let $\alpha,\beta>0$ such that $\beta/\alpha> C_{\bd}$. Then for every $u\in \bv(\om)$
\[\tgvba(u)=\alpha\|Du-m_{\cE}(\nabla u)\|_{\cM}.\]

\end{TGV_largeba}

\subsection{Thresholds for regression}\label{sec:large_a_b}

In this section we show that there exist some thresholds for $\alpha$ and $\beta$ above which the solution to the $L^{2}$--$\tgvba$ regularisation problem  is the $L^{2}$-linear regression of the data $f$, denoted by $f^{\star}$:
\[f^{\star}:=\underset{\phi \text{ affine}}{\operatorname{argmin}}\;\|f-\phi\|_{L^{2}(\om)}^{2}.\]
We are going to need the following proposition proved in \cite{BredValk}:
\newtheorem{poincare_tgv_thm}[optimality]{Proposition}
\begin{poincare_tgv_thm}[{\cite[Proposition~4.1]{BredValk}}]
Let $\om\subseteq \RR^{d}$ be a bounded, open set with Lipschitz boundary. Then for every $1\le p\le d/(d-1)$, there exists a constant $C_{\mathrm{BGV}}(\beta/\alpha)>0$, that depends only on $\om$, $p$ and the ratio $\beta/\alpha$ such that
\begin{equation}\label{poincare_tgv}
\|u-u^{\star}\|_{L^{p}(\om)}\le C_{\mathrm{BGV}}(\beta/\alpha)\tgv_{\beta/\alpha,1}^{2}(u).
\end{equation}
\end{poincare_tgv_thm}

In the next proposition we show the existence of these regression thresholds for $d=2$ and also for $d>2$ under the extra assumption that the $L^{p}$ norm of the data $f$ controls the $L^{p}$ norm of the solution for some $p\in[d,\infty]$.
\newtheorem{regression_L2}[optimality]{Proposition}
\begin{regression_L2}\label{lbl:regression_L2}
Let $\om\subseteq \RR^{d}$ be a bounded, open set with Lipschitz boundary. Suppose that either 
\begin{enumerate}[(i)]
\item $d=2$ and $f\in\bv(\om)$ or
\item $d>2$, $f\in L^{\infty}(\om)\cap\bv(\om)$ and there exists a constant $C>0$ depending only on the domain and $p\in[d,\infty]$ such that $\|u\|_{L^{p}(\om)}\le C\|f\|_{L^{p}(\om)}$ for $u$ solution to \eqref{uab}, 
\end{enumerate}
then there exist $\alpha^{\star}, \beta^{\star}>0$ such that whenever $\alpha>\alpha^{\star}$, $\beta>\beta^{\star}$ then the solution to the $L^{2}$--$\tgvba$ regularisation problem is equal to $f^{\star}$.
\end{regression_L2}
\begin{proof}
Suppose initially that $d=2$ and $f\in\bv(\om)$. Then using the H\"older inequality along with \eqref{poincare_tgv} and the fact that any function $u\in\bv(\om)$
that solves the $L^{2}$--$\tgvba$ problem has a $L^{2}$ norm bounded by a constant $C$ depending only on $f$ and not on $\alpha,\beta$
\begin{align}
\frac{1}{2}\|f-f^{\star}\|_{L^{2}(\om)}^{2} &=\min_{\phi \text{ affine}}\frac{1}{2}\|f-\phi\|_{L^{2}(\om)}^{2}\le \frac{1}{2}\|f-u^{\star}\|_{L^{2}(\om)}^{2} \nonumber\\
							    &= \frac{1}{2}\|f-u\|_{L^{2}(\om)}^{2}+\frac{1}{2}\|u-u^{\star}\|_{L^{2}(\om)}^{2}+\int_{\om}(f-u)(u-u^{\star})dx \nonumber\\
							    &\le \frac{1}{2}\|f-u\|_{L^{2}(\om)}^{2} +C(f)\|u-u^{\star}\|_{L^{2}(\om)}\label{holder}\\
							    &\le\frac{1}{2}\|f-u\|_{L^{2}(\om)}^{2} +C(f)C_{\mathrm{BGV}}(\beta/\alpha)\mathrm{TGV}_{\beta/\alpha,1}^{2}(u). \nonumber					    
\end{align}
Setting $\alpha^{\star}=C(f)C_{\mathrm{BGV}}(1)$ and $\beta^{\star}=\alpha^{\star}$ we have that if $\alpha>\alpha^{\star}$ and $\beta>\beta^{\star}$ we can further estimate
\begin{align*}
\frac{1}{2}\|f-f^{\star}\|_{L^{2}(\om)}^{2}&\le\frac{1}{2}\|f-u\|_{L^{2}(\om)}^{2} +C(f)C_{\mathrm{BGV}}(1)\mathrm{TGV}_{\beta^{\star}/\alpha^{\star},1}^{2}(u)\\
							  &\le\frac{1}{2}\|f-u\|_{L^{2}(\om)}^{2} +\alpha^{\star}\mathrm{TGV}_{\beta^{\star}/\alpha^{\star},1}^{2}(u)\\
							  & \le\frac{1}{2}\|f-u\|_{L^{2}(\om)}^{2} +\mathrm{TGV}_{\beta^{\star},\alpha^{\star}}^{2}(u)\\
							  &\ \le\frac{1}{2}\|f-u\|_{L^{2}(\om)}^{2} +\tgvba(u).						
\end{align*}
The proof goes through for the case $(ii)$ as well, where the only difference is that H\"older inequality in \eqref{holder} gives two terms $\|u-u^{\star}\|_{L^{p}(\om)}\|u-u^{\star}\|_{L^{p^{\ast}}(\om)}$ and $\|f-u\|_{L^{p}(\om)}\|u-u^{\star}\|_{L^{p^{\ast}}(\om)}$, where $p^{\ast}=p/(p-1)$ and $\infty^{\ast}:=1$. These terms can be further bounded using inequality \eqref{poincare_tgv} (note that $p^{\ast}\le d/(d-1)$) and the fact that $\|u\|_{L^{p}(\om)}\le C\|f\|_{L^{p}(\om)}$.
\end{proof}

More explicit regression thresholds have been given in \cite{papafitsoros2013study} both for general and specific one dimensional data $f$. Let us point out that the condition $\|u\|_{L^{p}(\om)}\le C\|f\|_{L^{p}(\om)}$ and in particular $\|u\|_{\infty}\le C\|f\|_{\infty}$ (which can be derived easily for $\tv$ regularisation with $C=1$), as natural as it may seems, it cannot be shown easily. However, if proved, it will also have positive implications as far as the inclusion of the jump set of the solution to the jump set of the data is concerned, see \cite{valkonen2014jump2}.

\subsection{Equivalence to $\boldsymbol{\tv}$ for large ratio $\boldsymbol{\beta/\alpha}$ and sufficiently symmetric data}\label{sec:large_ba_sym}

In Corollary \ref{lbl:TGV_largeba} we obtained a more precise characterisation of $\tgvba$ for large values of the ratio $\beta/\alpha$. In this section we show that at least for symmetric enough data $f$, $\tgvba$ regularisation is actually equivalent to $\alpha\tv$ regularisation. For the sake of the simplicity of the analysis we assume here that $\om$ is a two dimensional domain, i.e., $\om\subseteq \RR^{2}$. We will also need some symmetry for $\om$, for the time being let $\om$ be a square centered at the origin. We shall prove the following theorem.

\newtheorem{tgv_tv_thm}[optimality]{Theorem}
\begin{tgv_tv_thm}\label{lbl:tgv_tv_thm}
Suppose that $\om\subseteq\RR^{2}$ is a bounded, open square, centred at the origin and let $f\in\bv(\om)$ satisfy the following symmetry properties:
\begin{enumerate}[(i)]
\item $f$ is symmetric with respect to both axes, i.e.,
\[f(x_{1},x_{2})=f(-x_{1},x_{2}),\quad f(x_{1},x_{2})=f(x_{1},-x_{2}),\quad \text{for a.e. }(x_{1},x_{2})\in\om.\]
\item $f$ is invariant under $\pi/2$ rotations, i.e., $f(O_{\pi/2}x)=f(x)$, where $O_{\pi/2}$ denotes counterclockwise rotation by $\pi/2$ degrees.
\end{enumerate}
Then if $\beta/\alpha> C_{\bd}$, the problems 
\[\min_{u\in\bv(\om)} \frac{1}{p}\int_{\om}|f-u|^{p}dx+\tgvba(u)\quad \text{and}\quad\min_{u\in\bv(\om)} \frac{1}{p}\int_{\om}|f-u|^{p}dx+\alpha\tv(u)\]
for $p\ge 1$ are equivalent.
\end{tgv_tv_thm}

\newtheorem{tgv_tv_rem}[optimality]{Remark}
\begin{tgv_tv_rem}\label{lbl:tgv_tv_rem}
\emph{The proof of Theorem \ref{lbl:tgv_tv_thm} is essentially based on the fact that the symmetry of the data $f$ is inherited to the solution $u$ and thus to $\nabla u$. In that case we can show that $m_{\cE}(\nabla u)=0$ something that shows the equivalence of $\tgvba$ and $\alpha\tv$. Other symmetric domains, e.g. circles, rectangles, together with appropriate symmetry conditions for $f$ can also guarantee that $\nabla u$ has the desired symmetry properties as well. The same holds for any fidelities $\Phi(f,Tu)$ that ensure that the symmetry of $f$ is passed to $u$.}
\end{tgv_tv_rem}
Let us also mention that abusing the notation a bit, by $m_{\cE}(\nabla u)=0$ we mean that zero is a solution of the problem \eqref{L1-E-threshold:main} with $g=\nabla u$.

\begin{proof}[of Theorem \ref{lbl:tgv_tv_thm}]
Since $\beta/\alpha>C_{\bd}$, from Corollary \ref{lbl:TGV_largeba} we have that the $\tgvba$ regularisation problem is equivalent to 
\begin{equation}\label{Lp_tgv_largeba}
\min_{u\in\bv(\om)} \frac{1}{p}\int_{\om}|f-u|^{p}dx+\alpha\|Du-m_{\cE}(\nabla u)\|_{\cM}.
\end{equation}
Thus it suffices to show that $m_{\cE}(\nabla u)=0$. Since $f$ satisfies the symmetry properties $(i$)--$(ii)$, from the rotational invariance of $\tgvba$ \cite{TGV} we have that the same conditions hold for the $\tgvba$ regularised solution $u$. This also means that $\nabla u=(\nabla_{1}u,\nabla_{2}u)$ has the following properties for almost all $x=(x_{1},x_{2})\in\om$:
\begin{align}
\nabla_{1}u(x_{1},x_{2})&=\nabla_{1}u(x_{1},-x_{2}),\quad &\nabla_{2}u(x_{1},x_{2})&=\nabla_{2} u(-x_{1},x_{2}),\label{symmetry_nablau1}\\
\nabla u(x)&=-\nabla u(-x), &\nabla_{1}u(O_{\pi/2}x)&=\nabla_{2} u(x). \label{symmetry_nablau2}
\end{align}
Recalling that
\[m_{\cE}(\nabla u)=\underset{w\in\mathrm{Ker}\cE}{\operatorname{argmin}}\;  \|\nabla u-w\|_{L^{1}(\om;\RR^{d})},\]
and that $m_{\cE}(\nabla u)$ has the form $Ax+b$ it is easy to check, see the following lemma, that $m_{\cE}(\nabla u)=0$.
\end{proof}

\newtheorem{symmetry_lemma}[optimality]{Lemma}
\begin{symmetry_lemma}\label{lbl:symmetry_lemma}
Let $\om$ be a square centred at the origin and suppose that $g=(g_{1},g_{2})\in L^{1}(\om;\RR^{2})$ satisfies the symmetry properties
\begin{align}
g(x)=-g(-x),\quad g_{1}(x_{1},x_{2})=g(x_{1},-x_{2}),\quad& g_{2}(x_{1},x_{2})=g_{2}(-x_{1},x_{2}), \label{symmetry_g1}\\
&g_{1}(O_{\pi/2}x)=g_{2}(x),\label{symmetry_g1}
\end{align} 
for almost every $x=(x_{1},x_{2})\in \om$. Then the minimisation problem
\begin{equation}\label{min_g}
\min_{w\in \mathrm{Ker}\cE} \|g-w\|_{L^{1}(\om;\RR^{2})},
\end{equation}
admits $w=0$ as a solution.
\end{symmetry_lemma}
\begin{proof}
Recalling that $\mathrm{Ker}\cE$ consists of all the functions of the form $r(x)=Ax+b$ with $A$ being a skew symmetric function, we have that the minimisation \eqref{min_g} is equivalent to
\begin{equation}\label{min_g_equi1}
\min_{A,b} \int_{\om} |g(x)-Ax-b|dx,
\end{equation}
with corresponding optimality conditions
\[\int_{\om}\left \langle \frac{g(x)-AO_{\pi/2}x-b}{|g(x)-AO_{\pi/2}x-b|}, O_{\pi/2}x \right\rangle dx=0,\quad\text{with }O_{\pi/2}=
\left( \begin{array}{cc}
0 & -1 \\
1 & 0  \end{array} \right).
\]
Using the equalities $g_{2}(x_{1},x_{2})=g_{1}(-x_{2},x_{1})$ and $g_{1}(-x_{2},x_{1})=g_{1}(-x_{2},-x_{1})=-g_{1}(x_{1},x_{2})$ we have that $A=0$, $b=0$ solve \eqref{min_g_equi1} if 
\begin{align*}
&\int_{\om} \left \langle\frac{O_{\pi/2}g(x)}{|O_{\pi/2}g(x)|},x  \right\rangle dx=0\iff
\int \frac{x_{2}g_{1}(x_{1},x_{2})-x_{1}g_{2}(x_{1},x_{2})}{|g(x)|}dx=0\iff\\
&\int_{\om}\frac{x_{2}g_{1}(x_{1},x_{2})-x_{1}g_{1}(-x_{2},x_{1})}{\sqrt{g_{1}(x_{1},x_{2})^{2}+g_{1}(-x_{2},x_{1})^{2}}}dx=0\iff
\int_{\om}(x_{1}+x_{2})\frac{g_{1}(x_{1},x_{2})}{|g_{1}(x_{1},x_{2})|}dx=0,
\end{align*}
with last equality being true since $-g(-x)=g(x)$.
\end{proof}

\subsection{Numerical experiments}\label{sec:numerics}

\begin{figure}[h!]
\begin{center}
\subfloat[Symmetric  data]{
\includegraphics[width=0.21\textwidth]{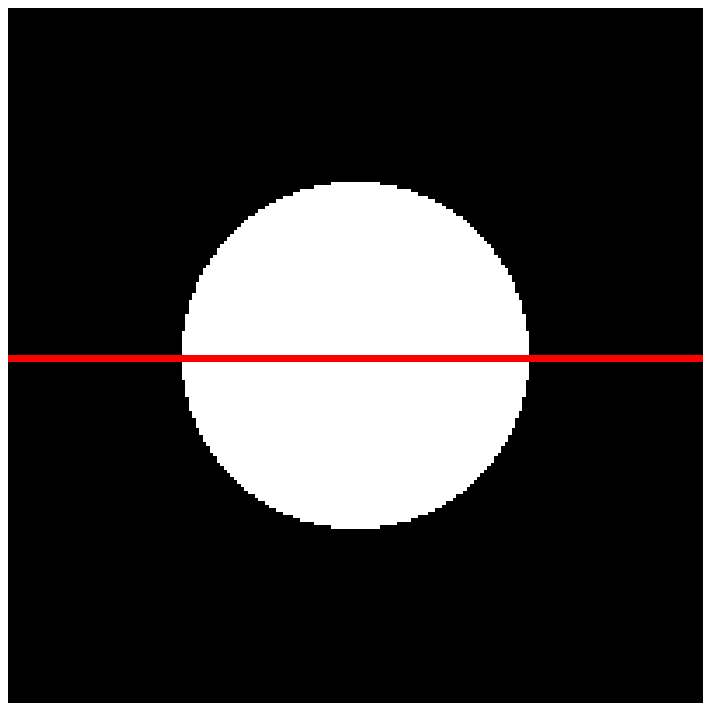}
}
\subfloat[$\tv$ solution,\newline $\alpha=10$]{
\includegraphics[width=0.21\textwidth]{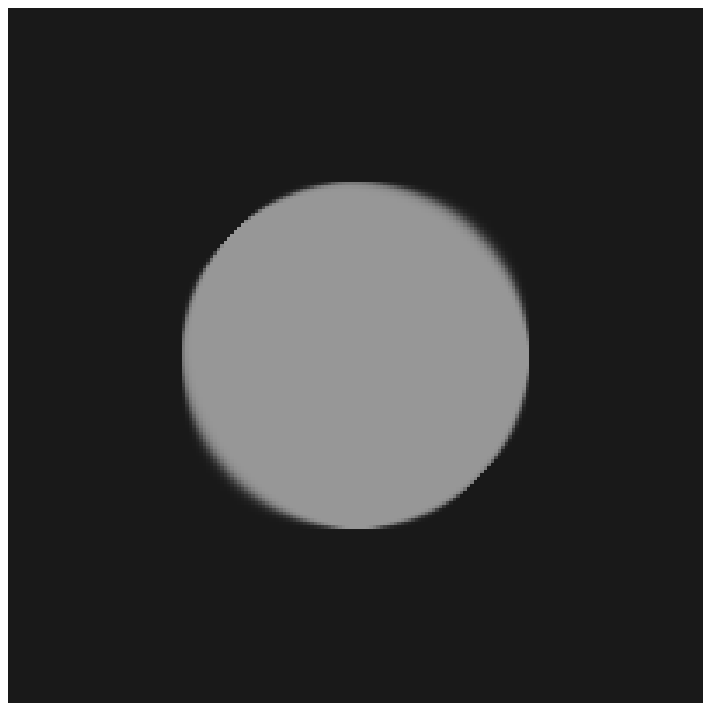}
}
\subfloat[$\tgv$ solution,\newline $\alpha=10$, $\beta=10^{6}$]{
\includegraphics[width=0.21\textwidth]{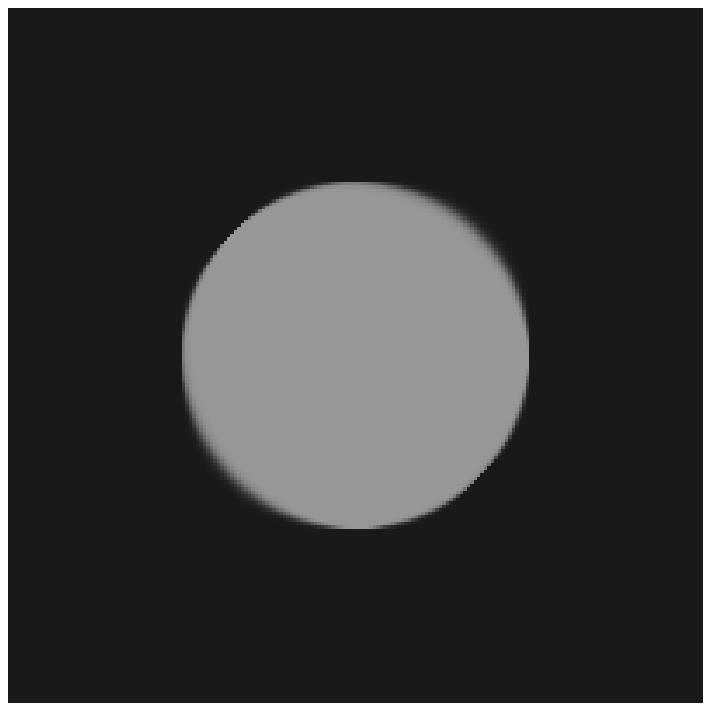}
}
\subfloat[$\tgv$ solution,\newline $\alpha=10$, $\beta=200$]{
\includegraphics[width=0.21\textwidth]{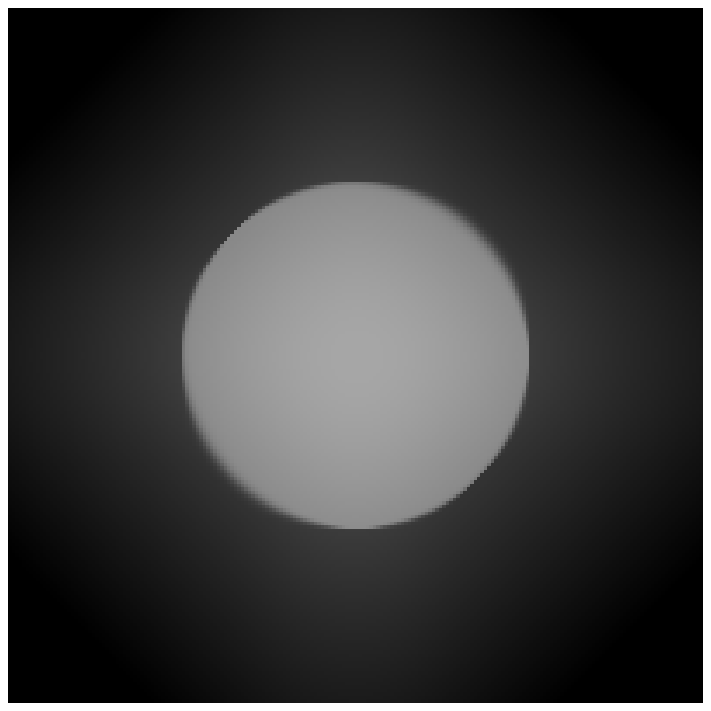}
}

\subfloat[Non-symmetric\newline data]{
\includegraphics[width=0.21\textwidth]{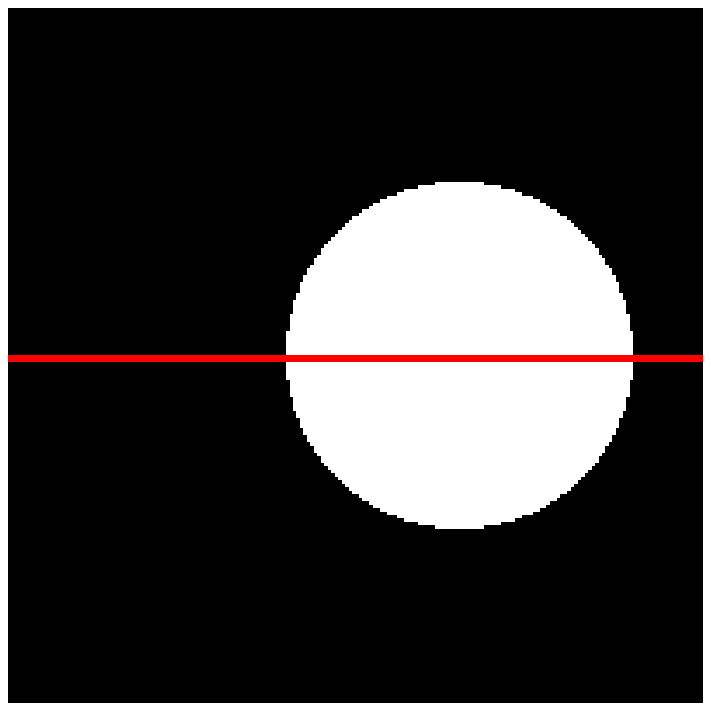}
}
\subfloat[$\tv$ solution,\newline $\alpha=10$]{
\includegraphics[width=0.21\textwidth]{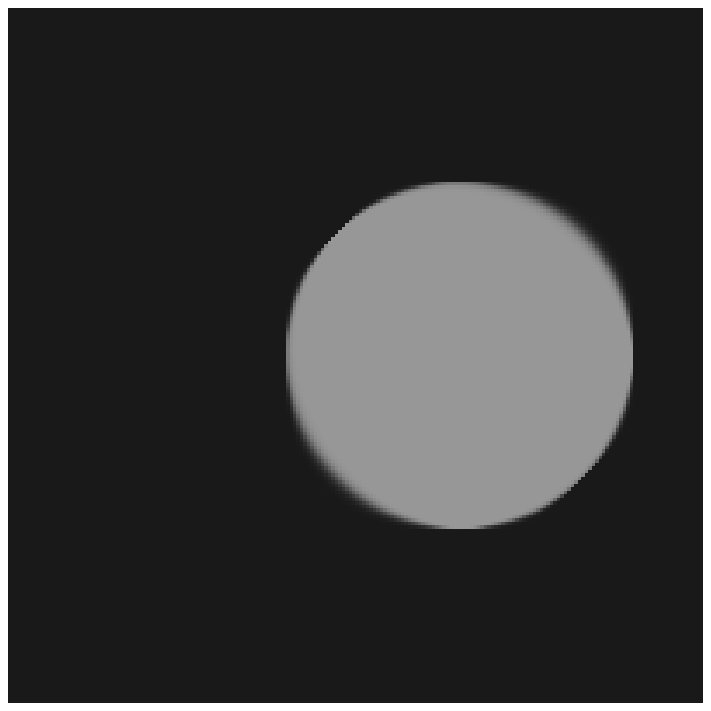}
}
\subfloat[$\tgv$ solution,\newline $\alpha=10$, $\beta=10^{6}$]{
\includegraphics[width=0.21\textwidth]{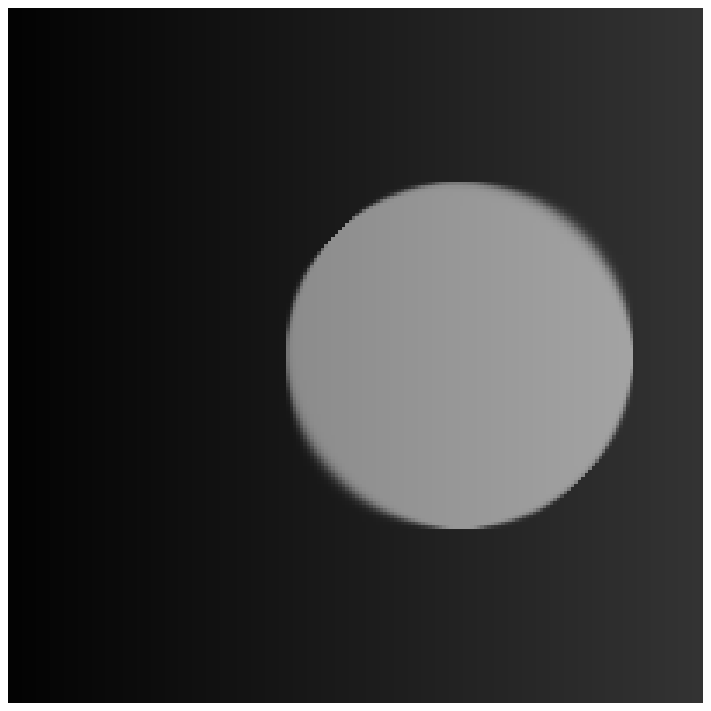}
}
\subfloat[$\tgv$ solution,\newline $\alpha=10$, $\beta=10^{6}$]{
\includegraphics[width=0.21\textwidth]{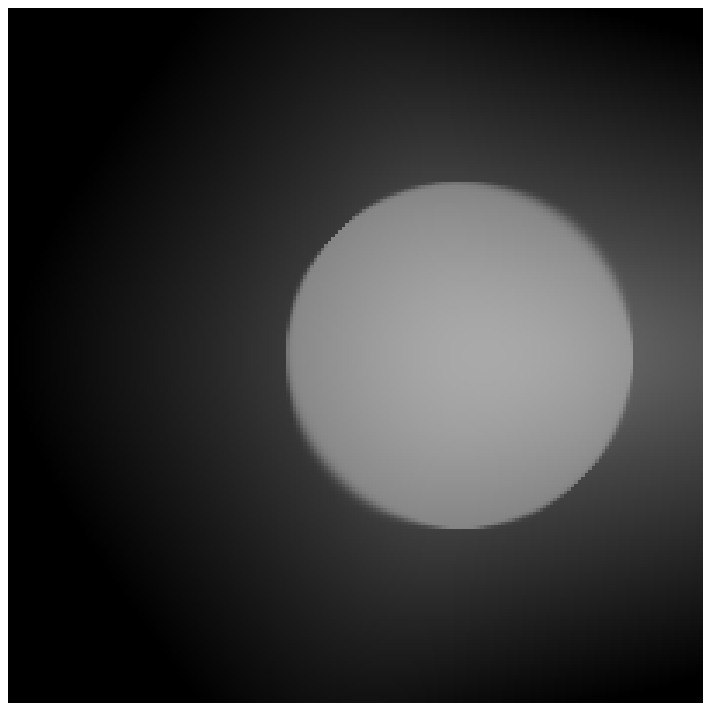}
}

\subfloat[Corresponding middle row slices for \newline symmetric data]{
\includegraphics[width=0.48\textwidth]{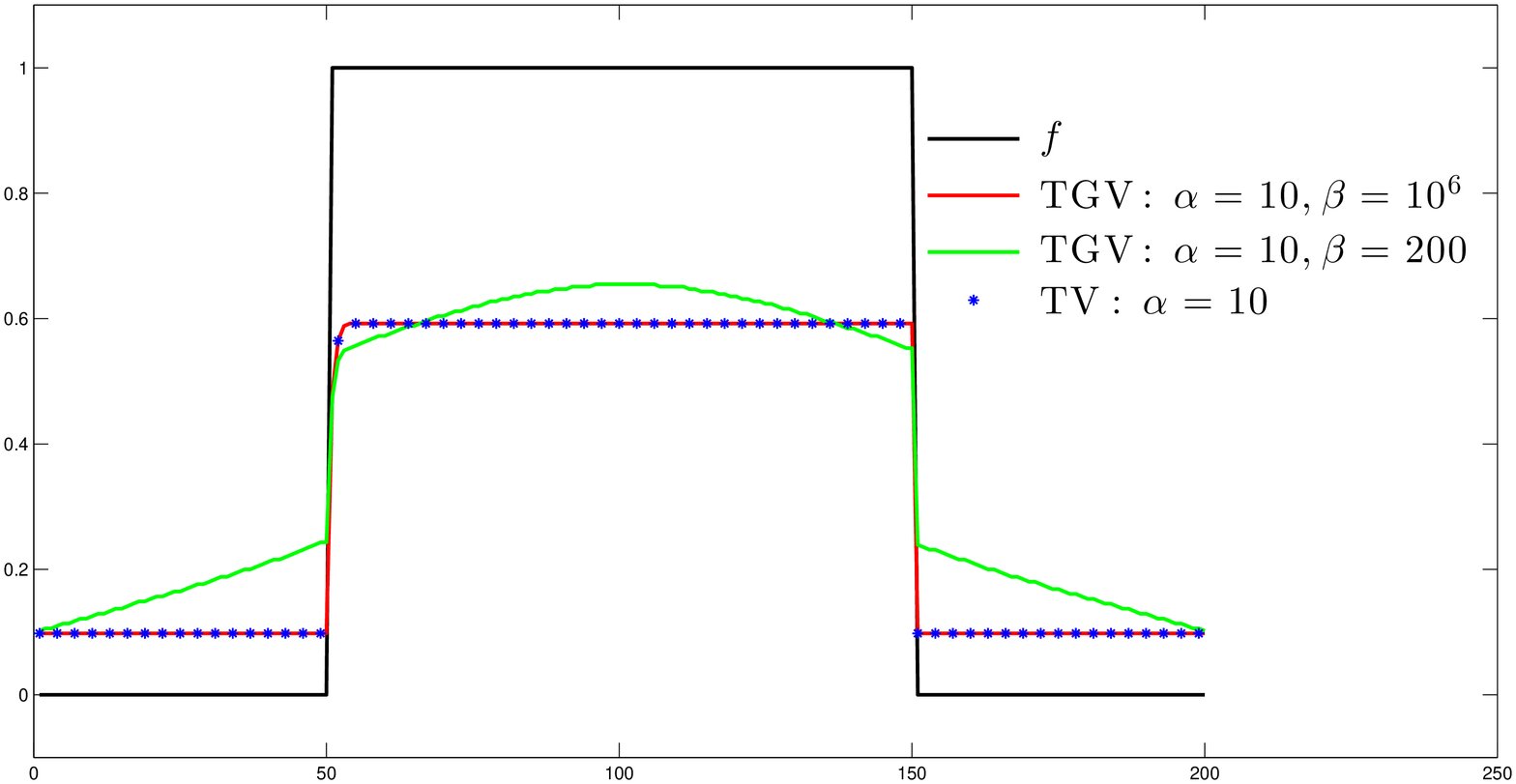}
}
\subfloat[Corresponding middle row slices for non-symmetric data]{
\includegraphics[width=0.48\textwidth]{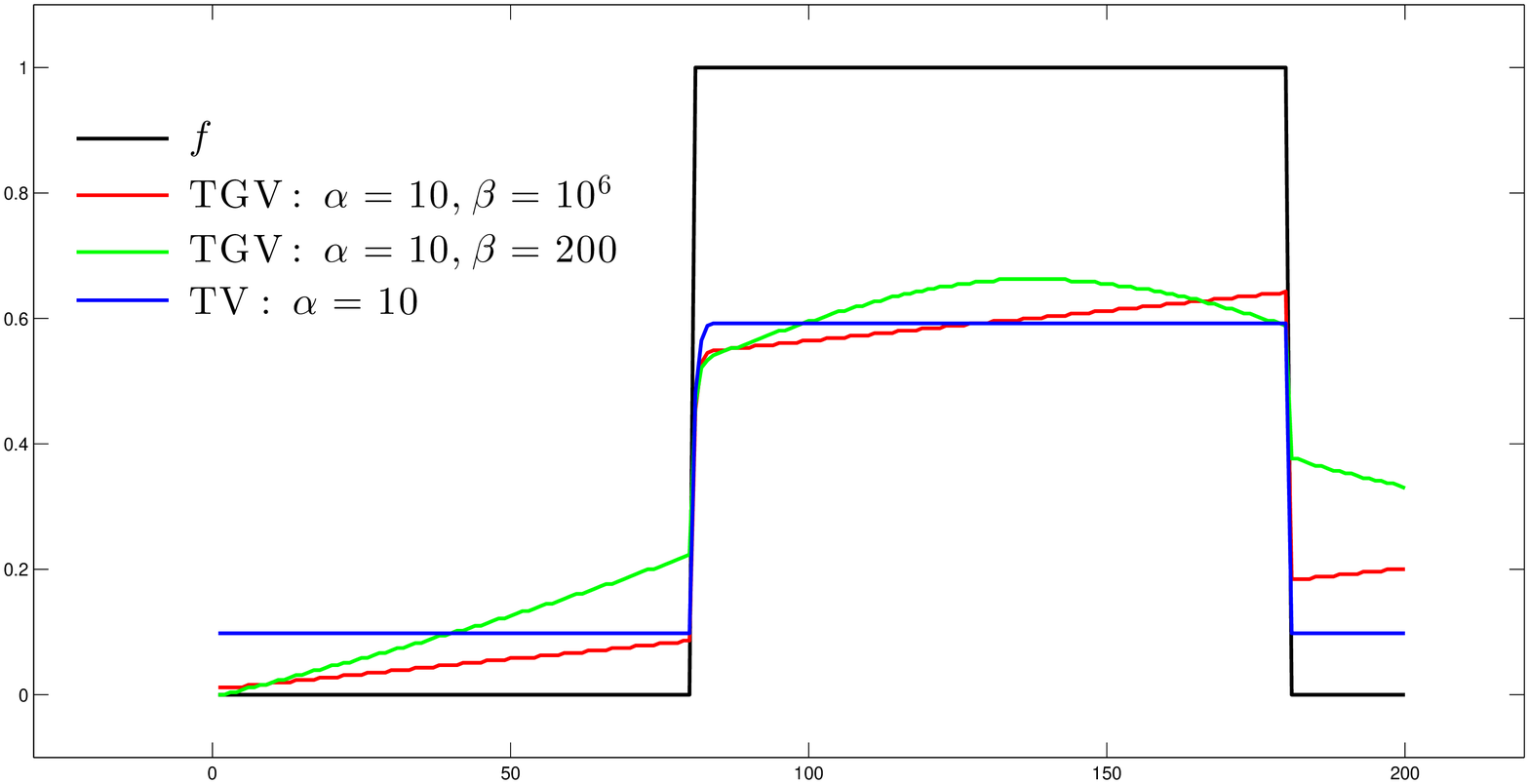}
}

\caption{Illustration of the two dimensional $\alpha\tv$ and $\tgvba$ equivalence for symmetric data when $\beta/\alpha$ is large enough.  Notice that the equivalence does not hold once the symmetry is broken.}
\label{square_2d_1}
\end{center}
\end{figure}

In this section we verify some of our results using numerical experiments. 
 In Figure \ref{square_2d_1} we confirm Theorem \ref{lbl:tgv_tv_thm}. There, we apply $\alpha\tv$ and $\tgvba$ denoising with $L^{2}$ fidelity, to a characteristic function of a disk centred at the middle of the domain, Figure \ref{square_2d_1}(a) and away from it, Figure \ref{square_2d_1}(e). Notice that the symmetry properties of Theorem \ref{lbl:tgv_tv_thm} are satisfied for the first case. There, we observe that by choosing the ratio $\beta/\alpha$  large enough, $\tgvba$ and $\alpha\tv$ solutions coincide, Figures \ref{square_2d_1}(b) and  \ref{square_2d_1}(c) . However, they do not coincide for small ratio $\beta/\alpha$, Figure \ref{square_2d_1}(d), see also the middle row slices in Figure \ref{square_2d_1}(i). In this case $\tgvba$ produces a piecewise smooth result in comparison to the piecewise constant one of $\alpha\tv$.
 Note that when the symmetry is broken, $\alpha\tv$ and $\tgvba$ solutions do not coincide even for large ratio $\beta/\alpha$, Figures \ref{square_2d_1}(g), \ref{square_2d_1}(h) and \ref{square_2d_1}(j). 
 
  Figure \ref{squares} depicts another example of an image that satisfies the symmetry properties of Theorem \ref{lbl:tgv_tv_thm}.  The $\alpha\tv$ solution coincides with the $\tgvba$ one for large ratio  $\beta/\alpha$, Figures \ref{squares}(b) and \ref{squares}(c), but not for small ratio, Figure \ref{squares}(d).

\begin{figure}[h!]
\begin{center}
\subfloat[Original image]{
\includegraphics[width=0.21\textwidth]{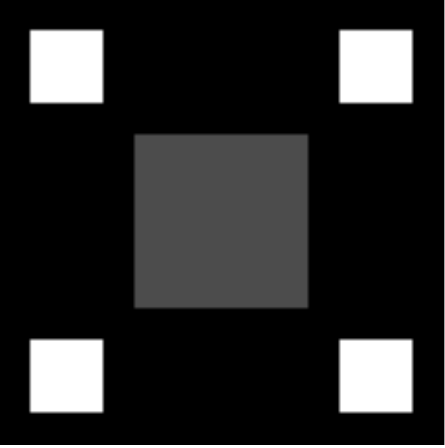}
}
\subfloat[$\tv$ solution,\newline $\alpha=1$]{
\includegraphics[width=0.21\textwidth]{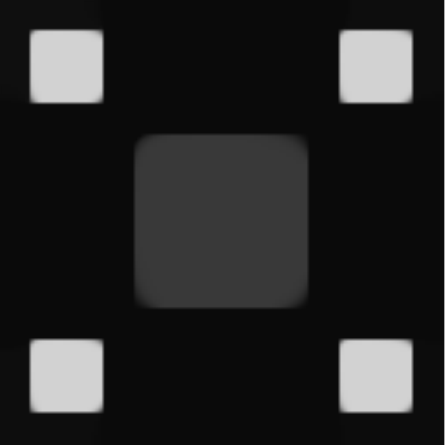}
}
\subfloat[$\tgv$ solution,\newline $\alpha=1$, $\beta=100$]{
\includegraphics[width=0.21\textwidth]{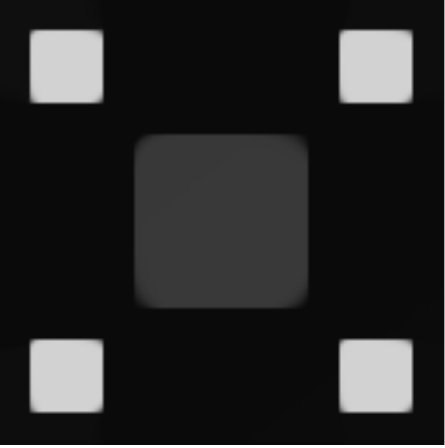}
}
\subfloat[$\tgv$ solution,\newline $\alpha=1$, $\beta=2$]{
\includegraphics[width=0.21\textwidth]{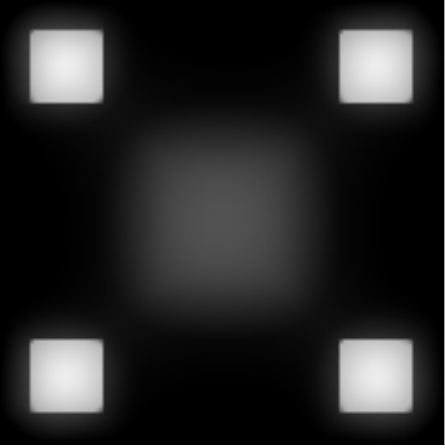}
}
\caption{Illustration of the two dimensional $\alpha\tv$ and $\tgvba$ equivalence for symmetric data when $\beta/\alpha$ is large enough.}
\label{squares}
\end{center}
\end{figure}

\begin{figure}[h!]
\begin{center}
\subfloat[Original image]{
\includegraphics[width=0.21\textwidth]{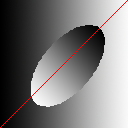}
}
\subfloat[Noisy image]{
\includegraphics[width=0.21\textwidth]{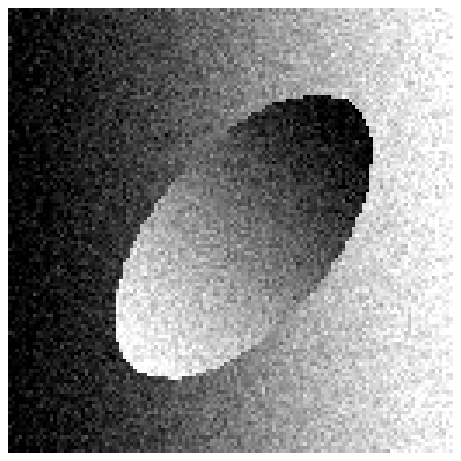}
}
\subfloat[$\tgv$ solution,\newline $\alpha=0.1$, $\beta=10^{-4}$]{
\includegraphics[width=0.21\textwidth]{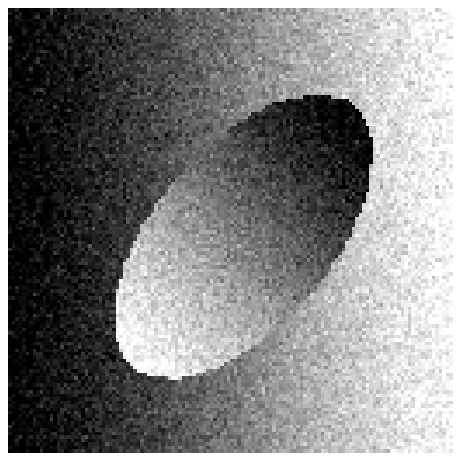}
}
\subfloat[$\tgv$ solution,\newline $\alpha=10^{-4}$, $\beta=0.15$]{
\includegraphics[width=0.21\textwidth]{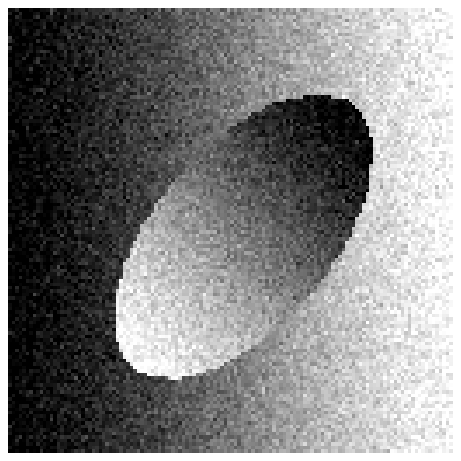}
}

\subfloat[$\tgv$ solution,\newline $\alpha=0.1$, $\beta=100$]{
\includegraphics[width=0.21\textwidth]{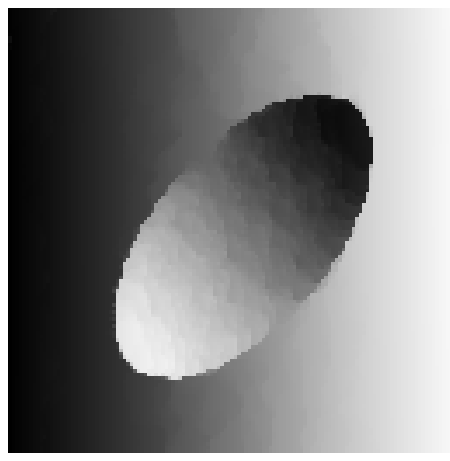}
}
\subfloat[$\tv$ solution,\newline $\alpha=0.1$]{
\includegraphics[width=0.21\textwidth]{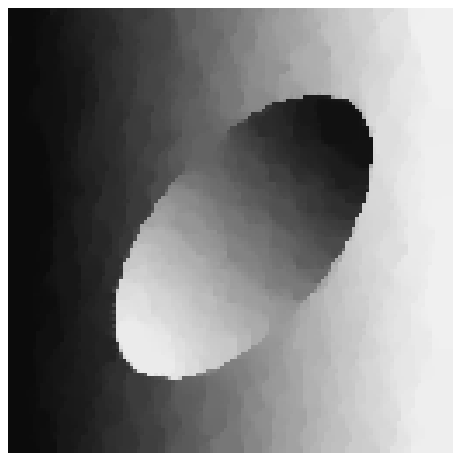}
}
\subfloat[$\tgv$ solution,\newline $\alpha=0.1$, $\beta=0.15$]{
\includegraphics[width=0.21\textwidth]{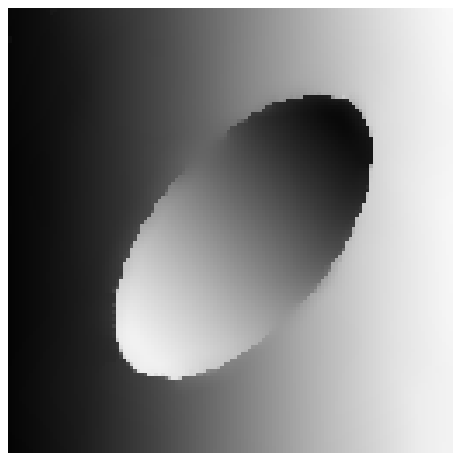}
}
\subfloat[$\tgv$ solution,\newline $\alpha=100$, $b=1000$]{
\includegraphics[width=0.21\textwidth]{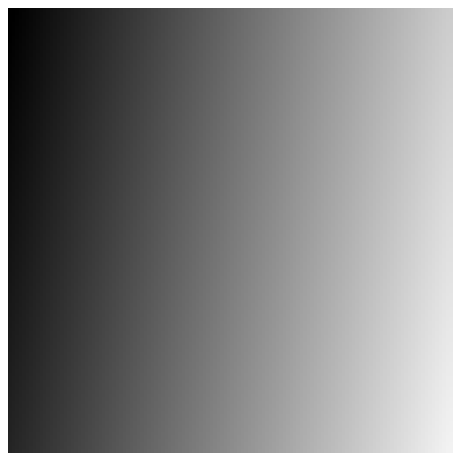}
}

\subfloat[Corresponding diagonal slices]{
 \resizebox{0.85\textwidth} {!}{\input{slices.tex}}
}

\caption{$L^{2}$--$\tgvba$ denoising for extremal values of $\alpha$ and $\beta$.}
\label{grad}
\end{center}
\end{figure}

Finally in Figure \ref{grad}, we solve the $L^{2}$--$\tgvba$ regularisation problem in a noisy image. We observe that for very small values of $\beta$ or $\alpha$, essentially we have no regularisation at all, see Figures \ref{grad}(c) and \ref{grad}(d) respectively, verifying Proposition \ref{lbl:todata}. In Figure \ref{grad}(e), we choose a large ratio $\beta/\alpha$, obtaining a $\tv$--like result which is nevertheless quite different than the $\alpha\tv$ result, Figure \ref{grad}(f), having staircasing only inside the ellipse.  This is due to the  ``affine'' correction predicted by Corollary \ref{lbl:TGV_largeba}, see also the corresponding diagonal slices in Figure \ref{grad}(i). Figure \ref{grad}(g) depicts a typical $\tgv$ solution with no staircasing while in Figure \ref{grad}(h) we set $\alpha$ and $\beta$ large enough and we obtain the linear regression of the data, as expected from Proposition \ref{lbl:regression_L2}.

\subsubsection*{Acknowledgements.} This work is supported by the King Abdullah University for Science and Technology (KAUST) Award No. KUK-I1-007-43. The first author acknowledges further support by the Cambridge Centre for Analysis (CCA) and the Engineering and Physical Sciences Research Council (EPSRC). The second author acknowledges further support from EPSRC grant EP/M00483X/1 ``Efficient computational tools for inverse imaging problems''.

\bibliographystyle{amsalpha}
\bibliography{kostasbib}

\end{document}